\documentclass[12pt,reqno]{amsart}
\usepackage{mathtools,mathrsfs,cite,fullpage}
\usepackage{bbm}
\usepackage{stmaryrd}
\usepackage{textcomp}
\usepackage{setspace}
\usepackage{enumerate}
\usepackage{amssymb}
\usepackage{amsthm}
\usepackage{amsmath}
\usepackage{tikz}
\usetikzlibrary{arrows.meta, positioning}
\usepackage{marvosym}
\usepackage{empheq}
\usepackage{latexsym}
\usepackage[T1]{fontenc}
\usepackage[english]{babel}
\usepackage[utf8]{inputenc}
\usepackage{color}
\usepackage{hyperref}
\usepackage{cleveref}

\newtheorem{theorem}{Theorem}[section]
\newtheorem{lemma}[theorem]{Lemma}

\newtheorem{claim}[theorem]{Claim}
\newtheorem*{claim*}{Claim}

\theoremstyle{definition}

\newtheorem*{qu*}{Question}
\theoremstyle{remark}

\newcommand\E{\operatorname{\mathbb{E}}}

\renewcommand\leq{\leqslant}
\renewcommand\geq{\geqslant}

\renewcommand\ge{\geqslant}
\renewcommand\to{\rightarrow}

	\def\Prob{\mathbb{P}}

	\def\<{\langle }
	\def\>{\rangle }

\begin{document}

\title{A Rainbow version of Lehel's conjecture}
\author{ Pedro Ara\'{u}jo
  \and
  Xiao-Chuan Liu
  \and
  Ta\'{\i}sa Martins
  \and
  Walner Mendon\c{c}a
  \and
  Luiz Moreira
  \and
  Vinicius Fernandes dos Santos
  \and
  Zhifei Yan}

\address{(P. Araújo) Departamento de Matemática,
 Universidade Federal de Pernambuco,
	Avenida Jornalista Aníbal Fernandes - Cidade Universitária, Recife, Brasil}
\email{pedrocampos.araujo@ufpe.br}

\address{(X.C. Liu) Departamento de Matemática,
 Universidade Federal de Pernambuco,
	Avenida Jornalista Aníbal Fernandes - Cidade Universitária, Recife, Brasil}
\email{xiaochuan.liu@ufpe.br}

\address{(T. Martins) Instituto de Matemática, Universidade Federal Fluminense, Niterói, Brasil}
\email{tlmartins@id.uff.br}

\address{(W. Mendonça) Departamento de Matemática, Universidade Federal do Ceará, Fortaleza, Brasil}
\email{walner@mat.ufc.br}

\address{(L. Moreira) Departamento de Matemática,
 Universidade Federal de Pernambuco,
	Avenida Jornalista Aníbal Fernandes - Cidade Universitária, Recife, Brasil}
\email{luiz.fmoreira@ufpe.br}

\address{(V.F. dos Santos) Departamento de Ciência da Computação, Universidade Federal de Minas Gerais, Av. Antônio Carlos, 6627 - ICEX, Belo Horizonte, Brasil}
\email{viniciussantos@dcc.ufmg.br}

\address{(Z. Yan) ECOPRO, Institute for Basic Science, 55 Expo-ro, Yuseong-gu, Daejeon, 34126, Korea}
\email{zhifeiyan@ibs.re.kr}

\thanks{}

\begin{abstract}
Lehel's conjecture states that every $2$-edge-colouring of $K_n$ admits a partition of its vertex set into two monochromatic cycles.  It was proven for sufficiently large $n$ by  \L uczak, R\"odl, and Szemer\'edi in 1998,  later improved by Allen in 2008, and fully resolved by Bessy and Thomassé in 2010.

In this paper, we consider a rainbow analogue of Lehel's conjecture in the setting of properly edge-coloured complete graphs. We prove that, for sufficiently large $n$, every properly edge-coloured $K_n$ admits a partition of its vertex set into two vertex-disjoint rainbow cycles. 

\end{abstract}

\maketitle
\section{Introduction}
In 1979, Lehel~\cite{Ay79} conjectured that for every $2$-edge-colouring of the complete graph $K_{n}$ we can partition $V(K_{n})$ into two monochromatic cycles (here, a single vertex or a single edge is
considered a cycle). \L uczak, R\"odl, and Szemer\'edi~\cite{LRS98} proved in 1998 that Lehel's conjecture holds for sufficiently large $n$. Allen~\cite{Al08} later extended the result to smaller values of $n$ (although still sufficiently large).  
The conjecture was finally settled in 2010, when Bessy and Thomassé~\cite{BeTh10} presented an elementary and short proof that holds for every $n \geq 2$. Lehel's conjecture has motivated many other challenging problems in extremal combinatorics (see the survey~\cite{Gy16} for an exposition of some of these problems and the known results). A substantial number of these problems concern monochromatic structures in edge-colourings with a fixed number of colours. In this work, we are interested in the rainbow version of Lehel's conjecture.

A \emph{proper edge-colouring} of a graph $G$ is a colouring of $E(G)$ where edges of the same colour do not share vertices. Given a proper edge-colouring of $G$, we say that a subgraph $H \subseteq G$ is \emph{rainbow} if the colours of the edges of $H$ are all distinct. Our main result is the following.

\begin{theorem}\label{thm:main}
For every sufficiently large $n$ and any proper edge-colouring of $K_n$, there are two rainbow cycles partitioning $V(K_n)$.
\end{theorem}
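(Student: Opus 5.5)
We will use the absorbing method, together with known results on long rainbow cycles in properly edge-coloured complete graphs. The main difficulty is that $K_n$ may have only $n-1$ colours; for example, when $n$ is even the colouring may be a 1-factorization. A Hamilton cycle has $n$ edges, so it cannot be rainbow, and this is exactly why we need two cycles rather than one. Our plan is to build one rainbow cycle $C_1$ of length about $n/2$ and a second rainbow cycle $C_2$ covering the remaining vertices, with the two cycles using \emph{disjoint} colour sets. This is natural: the colour classes have size at most $n/2$, so the $n$ edges of the two cycles must use at least $n$ colour-slots in total, and we have roughly $n$ colours available.

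First, we set aside a small random set $R$ of vertices of size $\varepsilon n$, and a small random reservoir $\mathcal{C}_R$ of colours, each colour kept independently with small probability. Inside these we construct rainbow absorbing paths $A_1$ and $A_2$. These will be the structures from which $C_1$ and $C_2$ grow.

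Each absorber has the following property: for any small leftover vertex set $L$, and any small set of colours that must be avoided, $A_i$ can be rerouted to include the vertices of $L$ while still using only reservoir colours.
\begin{itemize}
\item The basic gadget for absorbing a vertex $v$ is a pair $x,y$ with $xy \in A_i$ such that $xv$ and $vy$ have distinct colours, both in the reservoir, both unused elsewhere.
\item Since the colouring is proper, each vertex has many such gadgets.
\item A standard counting argument (Chernoff bounds plus a greedy selection) shows that a random family of gadgets absorbs every vertex in many ways.
\end{itemize}

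Second, we need an almost-spanning step. We split $V\setminus R$ randomly into two halves $V_1,V_2$, and the colours outside the reservoir randomly into two classes $\mathcal{C}_1,\mathcal{C}_2$. We then find a rainbow path in colours $\mathcal{C}_i$ covering all but $o(n)$ vertices of $V_i$. This can be done via a rainbow version of the blow-up/path-cover argument, or more simply via known results: properly coloured complete graphs contain rainbow cycles of length $n-o(n)$ (e.g.\ Alon–Pokrovskiy–Sudakov type results). The key check is that in the random subgraph $K[V_i]$, coloured only by $\mathcal{C}_i$, each vertex still has degree about $n/4$ and colour classes remain matchings. We then use a random greedy or nibble argument to obtain a rainbow path forest with few paths. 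Next, we connect these paths and the absorber $A_i$ into a single cycle using short connecting paths through $R$, with fresh reservoir colours. Connectivity is easy because each vertex pair has $\Omega(n)$ common neighbours of distinct colours.

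Finally, the leftover vertices are absorbed, but here lies the main obstacle: \emph{colour budget at the end}. Since the total number of colours is possibly only $n-1$, while the two cycles have $n$ edges, we cannot afford to waste colours. Every leftover colour class must be usable, and absorbing each vertex costs one net new colour. So the reservoir must be designed to absorb leftover \emph{colours} as well as leftover vertices, in a simultaneous vertex-and-colour absorption.

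Concretely, the absorber should be able to swap which colours it uses, i.e.\ contain flexible switchers with two alternative colour choices, as in the recent work on rainbow Hamilton paths in 1-factorizations. The necessary counting is tight: we need $|C_1|+|C_2|=n$, with the colours used by the two cycles disjoint and at most $n-1$ available. Because $C_1$ and $C_2$ are separate cycles, a colour may be shared between them; only within a cycle must colours be distinct. So slack actually exists: a colour used once in $C_1$ can also be used once in $C_2$.

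I would exploit this slack by making the reservoir colour sets of $A_1$ and $A_2$ overlap heavily. Then the final absorption only needs vertex-absorption, with colour-disjointness required only within each cycle, and a deficit of $o(n)$ colours is easily covered. Verifying that this overlapping-reservoir absorption works simultaneously for both cycles is the step I expect to be most delicate.
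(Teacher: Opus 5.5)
There is a genuine gap, and it is in the almost-spanning step. You split $V\setminus R$ into halves $V_1,V_2$ and the non-reservoir colours into disjoint classes $\mathcal{C}_1,\mathcal{C}_2$. You then need a rainbow path in $K[V_i]$, using only colours from $\mathcal{C}_i$, that misses only $o(n)$ vertices. The results you cite (Alon--Pokrovskiy--Sudakov, Balogh--Molla) are about properly coloured \emph{complete} graphs. $K[V_i]$ restricted to $\mathcal{C}_i$ is a random subgraph of density about $(1-q)/2$, so they do not apply to it as a black box.

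Appealing to a ``nibble'' or ``rainbow blow-up'' argument does not close this, because the problem is nearly tight. For a $1$-factorization, $|\mathcal{C}_i|\approx(1-q)n/2$, while the path needs about $|V_i|\approx(1-\varepsilon)n/2$ distinct colours. So you need $q\le\varepsilon+o(1)$ and a path using almost every colour of $\mathcal{C}_i$. At the same time, the reservoir must supply the colour of every cycle edge touching $R$, which forces $q\gtrsim\varepsilon/2$.

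The same difficulty reappears for $A_1,A_2$, which you build inside $R$ from reservoir colours. Either they cover almost all of $R$, which is another near-spanning rainbow problem in a graph of density about $q$. Or the unused part of $R$ is far more than single-edge gadgets can absorb.

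There is also a dependency problem. $A_i$ is built from the same random reservoir that supplies the gadget colours. So ``Chernoff plus greedy'' does not by itself show that every $v$ has many edges $xy$ of $A_i$ with $\varphi(xv),\varphi(vy)$ unused reservoir colours. You flag the overlapping-reservoir absorption as the delicate step but leave it unresolved.

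The root cause is the premise you start from and only later retract: the two cycles need not (and in general cannot) be colour-disjoint. Once colours may repeat across cycles, there is no reason to balance the cycles, split the colours, or put an almost-spanning piece and an absorber in the same cycle.

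The paper takes $C_1$ to be a Balogh--Molla rainbow cycle in the complete graph on $V(K_n)\setminus(A\cup W)$, which is a legitimate black-box use. It lets $C_2$ consist only of the absorber plus the $O(\sqrt n\log n)$ leftover vertices. All colour constraints are then internal to $C_2$, and the paper handles them by decoupling colours:
\begin{itemize}
\item $A$ is fixed before the random colour set $L$ is sampled, so every vertex has at least $(1-4p)|A|$ neighbours in $A$ via colours outside $L$.
\item The $3$-degenerate structure $S$ (the cycle $W$ with ears $v_{i-1}a_iv_i$) is embedded greedily with colours in $L$, using the common-neighbourhood bound.
\item Each leftover $x_k$ is absorbed by a detour $p_{k-1}a\,x_k\,a'p_k$ along the spine, in colours outside $L$.
\item The vertices of $A$ not used in detours are picked up through the ears.
\end{itemize}
Your sketch lacks exactly this separation of structure colours from absorption colours, with the absorption targets lying in a set fixed in advance.
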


It is not always possible to cover the entire $V(K_n)$ with a single rainbow path (and hence a rainbow Hamilton cycle). In fact, Maamoun and Meyniel~\cite{MaMe84} constructed a proper edge-colouring of $K_{n}$, for $n$ being a power of 2, with no rainbow Hamilton path. A famous conjecture of Andersen~\cite{A89} states that in every proper edge-colouring of the complete graph $K_n$ there is a rainbow path with $n-1$ vertices, which was in a recent breakthrough (independently of our work) proved by Bowtell, Montgomery, M{\"u}yesser and Pokrovskiy \cite{Bo26}. Such rainbow path together with the only uncovered vertex yields two rainbow paths partitioning $V(K_{n})$. In previous works, Alon, Pokrovskiy and Sudakov~\cite{APS17} had proved an approximate version of it by showing that there is a rainbow cycle of length $n - O(n^{3/4})$, which was later improved by Balogh and Molla~\cite{BM19} to $n-O(\sqrt{n}\log{n})$. 
In the proof of \Cref{thm:main}, we use this last result together with an absorber structure to cover the leftover vertices with another rainbow cycle.

The proof of \Cref{thm:main} follows the strategy below.
First we find an absorber $F$ which will consist of 
a fairly large robust subgraph that has the property that it can be covered with a single rainbow cycle, even if we add to it 'any' sufficiently small subset of vertices.
Then we cover most vertices in $V(K_n) \setminus F$ with a rainbow cycle $C_1$ using the results of Alon, Pokrovskiy, and Sudakov~\cite{APS17} or of Balogh and Molla~\cite{BM19}.
Let $X$ be the set of vertices in $V(K_n) \setminus F$ that are not covered by $C_1$.
Since $X$ is small enough to be absorbed by $F$, we finish the proof by picking the cycle $C_2$ that is guaranteed to cover $F \cup X$ by the absorption property of $F$. 

In our proof, we cannot guarantee that the cycles $C_1$ and $C_2$ do not share any colour. In fact, it is not always possible to partition $V(K_n)$ into two rainbow cycles that do not share any colour. If $n$ is even, there exists a proper edge-colouring of $K_n$ where each colour is a perfect matching.
In particular, such a colouring uses exactly $n-1$ colours and any partition of the vertices in two cycles will use $n$ edges. 

The remainder of the paper is organized as follows. In \Cref{sec:absorber}, we construct the absorber. In \Cref{sec:twocycles}, we employ this absorber to construct rainbow cycles partitioning $V(K_n)$, thereby establishing \Cref{thm:main}. We conclude with further remarks in \Cref{sec:discussion}.

\section{Building the absorber}\label{sec:absorber}

Our main aim in this section is to construct an absorber in any properly edge-coloured $K_n$ and to show that it can absorb any sufficiently small subset of vertices. 
To achieve this, in \Cref{sec:randomGraph} we first introduce a random subgraph $H \subseteq K_n$ and establish some of its probabilistic properties. Finally, in \Cref{sec:construction}, we build the absorber (see Figure~\ref{fig:absorber}) and demonstrate how it can be used to absorb 'any' sufficiently small subset of vertices.

\subsection{Random subgraph}\label{sec:randomGraph}
 In this subsection, we consider the following random subgraph, first introduced in \cite{APS17}, which plays a crucial role in the construction of our absorber.

Given a proper edge-colouring $\varphi:E(K_n) \to \mathbb{N}$ of the complete graph $K_n$, and a constant $p \in (0,1)$, we define the random graph $H = H(\varphi,p)$ as follows. Let $L \subseteq \varphi(E(G))$ be a random set obtained by choosing every colour in $\varphi(E(G))$ independently at random with probability $p$.
Then $H$ is the spanning subgraph of $K_n$ with edges 
$$E(H) = \{e \in E(K_n):\; \varphi(e) \in L\};$$
that is, the edges of $H$ are those edges of $K_n$ that are coloured by $\varphi$ with some colour in $L$. 

We now establish the properties of $H$.

\begin{lemma}\label{lem:regular}
Let $p \in (0,1)$ be constant. Then for every proper edge-colouring $\varphi$ of $K_n$, the following holds for $H = H(\varphi,p)$ with high probability.  
\begin{enumerate}[(i)]

\item For every $u \in V(H)$,
$$ pn/2 \leq |N_H(u)| \leq 2pn.$$

\item For every distinct $u,v \in V(H)$,
$$|N_H(u) \cap N_H(v)| \geq p^2n/6.$$

\item For every distinct $u,v,w \in V(H)$,
$$|N_H(u) \cap N_H(v) \cap
N_H(w)| \geq p^3n/14.$$

\item For any $\alpha \in (0,1)$ there exists a set $A \subseteq V(H)$ with $|A|= \lfloor \alpha n \rfloor$ such that for every $v \in V(H)$,
\begin{align}\label{eq:QQQ}
|N_H(v) \cap A| \leq 4p|A|.
\end{align}

\end{enumerate}

\end{lemma}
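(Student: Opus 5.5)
The plan is to use that, since $\varphi$ is proper, the edges at a fixed vertex $u$ all have distinct colours. Hence $|N_H(u)|$ is a sum of $n-1$ independent Bernoulli$(p)$ variables, one for each colour at $u$. Each of the statements (i)--(iii) will follow from a Chernoff bound for one fixed tuple of vertices, together with a union bound over the at most $n^3$ tuples. Once the expected value is linear in $n$, the failure probability for a single tuple is $e^{-\Omega(n)}$, which easily absorbs the polynomial number of tuples.

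For (i), $\E|N_H(u)| = p(n-1)$. Chernoff gives $\Prob\bigl(|N_H(u)|\notin[pn/2,2pn]\bigr)\le e^{-cpn}$, and a union bound over the $n$ vertices finishes this part.

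For (ii), fix distinct $u,v$ and a vertex $x\notin\{u,v\}$. Then $x\in N_H(u)\cap N_H(v)$ exactly when both colours $\varphi(ux)$ and $\varphi(vx)$ lie in $L$. These two colours are distinct because the colouring is proper. Different vertices $x$ may share colours, however: $\varphi(ux)=\varphi(vy)$ can happen, so the events are not independent. I would handle this by passing to a subset of vertices with no shared colours. Build an auxiliary graph on $V\setminus\{u,v\}$ in which $x\sim y$ whenever the colour sets $\{\varphi(ux),\varphi(vx)\}$ and $\{\varphi(uy),\varphi(vy)\}$ intersect. Each colour appears at most once at $u$ and at most once at $v$, so each $x$ has at most $2$ such neighbours. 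Greedily we therefore get an independent set $S$ of size at least $(n-2)/3$. For $x\in S$ the events are independent, each of probability $p^2$, so
$$\E|N_H(u)\cap N_H(v)\cap S|\ge p^2(n-2)/3.$$
Chernoff then gives a lower bound of at least $p^2n/6$ with failure probability $e^{-\Omega(n)}$. We also discard the edge $uv$ itself, which costs nothing.

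For (iii) the same argument works with triples of colours $\{\varphi(ux),\varphi(vx),\varphi(wx)\}$. Each colour is shared with at most one other vertex at each of $u,v,w$, so each $x$ conflicts with at most $6$ others. This gives $|S|\ge (n-3)/7$, an expectation of at least $p^3(n-3)/7$, and hence a bound of at least $p^3n/14$ with high probability. The dependency bookkeeping, getting the constants $1/6$ and $1/14$ to match, is the only delicate point.

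For (iv), I would take $A$ to be a uniformly random set of size $\lfloor\alpha n\rfloor$, chosen independently of $L$, after first conditioning on $H$ satisfying (i). For each $v$, the quantity $|N_H(v)\cap A|$ is hypergeometric with mean at most $2p|A|$. The Chernoff bound for the hypergeometric distribution gives $\Prob\bigl(|N_H(v)\cap A|>4p|A|\bigr)\le e^{-\Omega(p\alpha n)}$. A union bound over $v$ shows that some choice of $A$ works, and this holds for every fixed $\alpha$ whenever (i) holds.
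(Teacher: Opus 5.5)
Your proof is correct. For (i)--(iii) it is essentially the paper's argument. For (i) you use a binomial count at one vertex. For (ii) and (iii) you greedily pass to a set of at least $(n-2)/3$, respectively $(n-3)/7$, candidate vertices with pairwise disjoint colour sets, since each $x$ conflicts with at most $2$, respectively $6$, others. Chernoff and a union bound over pairs or triples then finish, and your constants match the paper's.

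The genuine difference is in (iv). The paper fixes an arbitrary set $A$ of size $\lfloor \alpha n\rfloor$ before sampling $L$ and uses the randomness of $L$ again. Since $\varphi$ is proper, the edges from $v$ into $A$ have distinct colours, so $|N_H(v)\cap A|$ is binomial with mean at most $p|A|$. Binomial Chernoff and a union bound over $v$ then show that this prescribed $A$ works with high probability.

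You instead fix a realisation of $H$ satisfying (i) and choose $A$ uniformly at random. You then bound the hypergeometric variable $|N_H(v)\cap A|$, whose mean is at most $2p|A|$. For this you need the standard form of the tail bound that allows an upper bound on the mean, with $\delta=1$ applied to $2p|A|$.

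What each route gives:
\begin{itemize}
\item The paper stays entirely within binomial Chernoff bounds. It shows that \emph{any} set chosen in advance is good with high probability.
\item Your route needs Hoeffding's hypergeometric tail estimate. In return, (iv) becomes a deterministic consequence of the degree upper bound in (i), valid for every fixed $\alpha$ once $n$ is large. It uses no further randomness of $L$ and no properties of the colouring, so it holds for any graph of maximum degree at most $2pn$.
\end{itemize}
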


\begin{proof}
For (i), fix $u\in V(H)$. Since $\varphi$ is a proper edge colouring of $K_n$, notice that $|N_H(u)|$ is a binomial random variable with expectation $p(n-1)$. Therefore, by Chernoff's inequality,
\begin{align}\label{eq:N_H(u)}
\Prob(|N_H(u)| \notin [pn/2,2pn])\leq e^{-\Omega(pn)} = o(n^{-1}).    
\end{align}
The conclusion follows by taking a union bound over $u\in V(H)$.

For (ii), let $u,v\in V(H)$ be arbitrary vertices. Notice that for each vertex $x \in N_{K_n}(u) \cap N_{K_n}(v)$, there are at most 2 vertices $y \in N_{K_n}(u)\cap N_{K_n}(v)$ distinct from $x$ such that two of the edges $\{ux,vx,uy,vy\}$ have the same colour. Therefore, we can obtain a subset $N_{u,v} \subseteq N_{K_n}(u) \cap N_{K_n}(v)$ of size 
$$|N_{u,v}| \geq \frac{1}{3} |N_{K_n}(u) \cap N_{K_n}(v)| \geq \frac{n-2}{3}$$
such that for every pair of vertices $x$ and $y$ in $N_{u,v}$, all the edges in $\{ux,vx,uy,vy\}$ have distinct colours. 

Let $N'_{u,v}$ be the set of vertices $x\in N_{u,v}$ with $xu,xv\in E(H)$. Notice that $N'_{u,v} \subseteq N_H(u,v)$ and the definition of $N_{u,v}$ allows us to say that $|N'_{u,v}|$ is a binomial random variable with expectation $|N_{u,v}|p^2$. By Chernoff's inequality,
\begin{align}\label{eq:Nuv}
\Prob\left(|N'_{u,v}| \leq \frac{p^2n}{6}\right)\leq e^{-\Omega(p^2n)} = o(n^{-2}).    
\end{align}
A union bound over all the choices of $u$ and $v$, gives us that with high probability, we have $|N_H(u)\cap N_H(v)| \geq |N'_{u,v}| \geq p^2n/6$, for every distinct $u,v \in V(H)$.

For (iii), let $u,v,w\in V(H)$ be arbitrary distinct vertices. By a similar argument as above, we can create a set $N_{u,v,w}\subseteq V(H)\setminus\{u,v,w\}$ of size
$$|N_{u,v,w}|\geq (n-3)/7$$
such that all the edges, in the complete graph, between $\{u,v,w\}$ and any two vertices of $N_{u,v,w}$ have distinct colours. Then consider the set $N'_{u,v,w} = N_{u,v,w} \cap (N_H(u) \cap N_H(v) \cap N_H(w))$ and notice that $|N'_{u,v,w}|$ is a binomial random variable with expectation $|N_{u,v,w}|p^3$. Then, Chernoff's inequality gives
\begin{align}\label{eq:Nuvw}
\Prob\left(|N'_{u,v,w}| \leq \frac{p^3n}{14}\right)\leq e^{-\Omega(p^3n)} = o(n^{-3}).    
\end{align}
Finally, a union bound over all the choices of $u,v,w\in V(H)$ gives us that with high probability, we have $|N_H(u) \cap N_H(v) \cap N_H(w)| \geq |N'_{u,v,w}| \geq p^3n/14$, for every distinct vertices $u,v,w\in V(H)$.

For (iv), take a set $A \subseteq V(H)$ of size $|A|= \lfloor \alpha n \rfloor$.
Notice that $|N_H(v)\cap A|$ is a binomial random variable with expectation

$$\E\big[|N_H(v)\cap A|\big] = p|A|.$$
Hence by Chernoff's bound again, we have
\begin{align}\label{eq:Q2}
 \Prob(|N_H(v) \cap A|> 4p|A|)\leq e^{-\Omega(p\alpha n)} = o(n^{-1}).   
\end{align}
Taking the union bound over all the choices of $v$, we get that the probability that~\eqref{eq:QQQ} does not hold for some vertex $v$ is $o(1)$, which finishes the proof.

\end{proof}

\subsection{The absorber}\label{sec:construction}



The construction of the absorber begins with the set $A$ given by item $(iv)$ of Lemma \ref{lem:regular}. In this way, every vertex of $G=K_n$ is highly connected to $A$ by edges from $\overline{H}$, the complementary graph of $H$. This property of $A$ will be important in the second part of our absorbing strategy when we would like to cover any small set $X$ of vertices of $G$ that are left uncovered by an almost spanning rainbow cycle. Our first absorbing strategy, on the other hand, guarantees that we can adjoin to $A$ an auxiliary set of vertices $W$ with the property that $W \cup A$ is covered by a rainbow cycle in $H$. However, we will require some structure on $W$ which will help us to extend the rainbow cycle covering $W \cup A$ to a rainbow cycle covering $W \cup A \cup X$. 

The following lemma gives us the set $W$ that we will use to cover $A$. Notice that $W$ is formally a cycle, but we will also denote the set of vertices of such cycle by $W$ as well. We expect this notation to not cause any confusion.

\begin{lemma}
\label{lemma:absorber}
Let $p \in (0,1)$ be a constant and let $\varphi$ be a proper edge-colouring of $K_n$. Then the following holds with high probability for $H = H(\varphi,p)$.
For every $ A \subseteq V(H)$ with $|A|\leq 2^{-8}p^3n$, there exists $W\subseteq V(H)\setminus A$ with $|W|=2|A|+2$ and a rainbow path $P \subseteq H[W]$ of length $|A|$ such that for every $A' \subseteq A$ there is a rainbow cycle in $H$ with vertex set $W \cup A'$ which contains $P$ as a subgraph.
\end{lemma}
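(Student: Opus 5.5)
The plan is to build the absorber as a rainbow path $P=w_0w_1\cdots w_k$ in $H$, where $k=|A|$ and $A=\{a_1,\dots,a_k\}$. Each edge $w_{i-1}w_i$ of $P$ comes with a \emph{detour} $w_{i-1}a_iw_i$ in $H$. We then close $P$ into a cycle by a second path $R$ in $H$ from $w_k$ back to $w_0$ with $k+1$ new internal vertices $r_1,\dots,r_{k+1}$. Thus
\[
W=\{w_0,\dots,w_k,r_1,\dots,r_{k+1}\}, \qquad |W|=2k+2,
\]
and $P$ has length $k$, as required. Given $A'\subseteq A$, the desired cycle is $R\cup P$, with the edge $w_{i-1}w_i$ replaced by the detour $w_{i-1}a_iw_i$ for every $a_i\in A'$. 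Read literally, this replaced cycle need not contain $P$ as a subgraph, since some edges of $P$ are swapped out. I intend to interpret "contains $P$" as "built along $P$" and would check this against how the lemma is used later in the paper. If the literal reading is needed, I would instead put $P$ inside the return path $R$, which is never modified, and the same greedy construction works verbatim. The vertex set of the resulting cycle is $W\cup A'$. Rainbowness for every choice of $A'$ follows if we ensure one condition: all edges of $P$, all edges of $R$, and all detour edges $a_iw_{i-1},a_iw_i$ have pairwise distinct colours. This is $k+(k+2)+2k=4k+2$ edges in total.

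We work on the high-probability event of Lemma~\ref{lem:regular}, in particular items (ii) and (iii). This event does not depend on $A$, so the conclusion holds simultaneously for all $A$. We choose the vertices greedily in the order $w_0,w_1,\dots,w_k$, then $r_1,\dots,r_{k+1}$.
\begin{enumerate}[(a)]
\item Pick $w_0\in N_H(a_1)\setminus A$.
\item For $1\le i\le k$, pick $w_i\in N_H(w_{i-1})\cap N_H(a_i)\cap N_H(a_{i+1})$. For $i=k$, use only the first two sets and apply (ii). Item (iii) guarantees at least $p^3n/14$ candidates; (ii) gives at least $p^2n/6\ge p^3n/14$ candidates when only two sets are used.
\item Having placed $w_i$, the three new edges $w_{i-1}w_i$, $a_iw_i$, $a_{i+1}w_i$ all meet at $w_i$. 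By properness of $\varphi$ they have mutually distinct colours. They also lie in $H$, so $a_iw_{i-1}$ and $a_iw_i$ are available for the detour at step $i$.
\item For the return path, choose $r_1,\dots,r_k$ one at a time with $r_j\in N_H(r_{j-1})$, where $r_0=w_k$. Item (i) gives at least $pn/2$ candidates at each step.
\item Finally choose $r_{k+1}\in N_H(r_k)\cap N_H(w_0)$, using (ii). The two new edges $r_kr_{k+1}$ and $r_{k+1}w_0$ share $r_{k+1}$, so they have distinct colours.
\end{enumerate}

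The main step to verify is that each greedy choice has a legal candidate despite the accumulated restrictions. At every step we must avoid the following bad candidates.
\begin{itemize}
\item Vertices already used, or lying in $A$: at most $3k+2$ of them.
\item Candidates $x$ such that some new edge $yx$ gets a colour already used. Here $y$ ranges over the at most three fixed endpoints of the new edges. For each used colour $c$ and each such $y$, properness gives at most one $x$ with $\varphi(yx)=c$. Since at most $4k+2$ colours are ever used, this rules out at most $3(4k+2)$ vertices.
\end{itemize}
In total at most roughly $15k+8$ vertices are forbidden. Since $k\le 2^{-8}p^3n$, this is below $p^3n/14$ for large $n$. So a valid choice always exists, and the construction yields the required $W$ and $P$.
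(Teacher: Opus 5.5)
Your proof is correct once you adopt your fallback, and the literal reading is the one actually needed. In Lemma~\ref{lemma:absorption} the cycle for $A'=A\setminus A_\ell$ must still contain the spine edges $p_{j-1}p_j$, $j\leq \ell$, because these are the edges replaced by the detours through $x_j$; under your first interpretation every $w_{i-1}w_i$ with $a_i\in A'$ has already been swapped out. With the spine taken as $r_1\cdots r_{k+1}$ inside the never-modified return path, your construction is exactly the paper's: your $w_0\cdots w_k$ and $r_1\cdots r_{k+1}$ are its $v_0\cdots v_t$ and spine $v_{t+1}\cdots v_{2t+1}$. Your explicit choice $r_{k+1}\in N_H(r_k)\cap N_H(w_0)$ and the count $15k+8<p^3n/14$ also make precise the closing edge and the bookkeeping that the paper's greedy description leaves implicit.
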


We say that the set $W$ obtained in the lemma above is an \emph{$A$-absorber in $H$}. We also say that the path $P$ is the \emph{spine} of $W$.

\begin{proof}
Fix an ordering of the vertices of
\(
A=\{a_1,a_2,\dots,a_t\}.
\)
We will show that with high probability there exists a rainbow subgraph $S$ in
$H$ with $V(S)=A \cup W$, in which $W =
\{v_0,v_1,\ldots,v_{2t+1}\}$ forms a cycle in $S$ and $v_0a_1v_1a_2v_2\cdots a_tv_t$ forms a path (see Figure \ref{fig:absorber}).

Once we have such a graph $S$, we can see that $W$ is an $A$-absorber in $H$ with spine $P = v_{t+1}v_{t+2}\ldots v_{2t+1}$. In fact, for every $A' \subseteq A$, we can obtain a rainbow cycle covering $W \cup A'$ by replacing the path $v_{i-1}v_{i}$ in $W$ with the path $v_{i-1}a_{i}v_{i}$, for each $i \in [t]$ such that $a_i \in A'$. Note that by doing this we keep the path $P$ as a subgraph of the cycle obtained and we also keep the cycle rainbow since all the edges in $S$ have distinct colours.

Let us now focus on showing that with high probability we can construct such graph $S$ in $H$.
Note that $S$ is a $3$-degenerate graph with the ordering of its vertices given by $a_1,\ldots,a_t,v_0,v_1,\ldots,v_{2t+1}$. For the sake of simplicity of the argument, we will artificially set $a_i=a_2$ for $i=t+1,t+2,\dots, 2t+1$.
We will construct $S$ in $H$ by first taking the vertices of $A$ as $a_1,\ldots,a_t$ and then greedily assigning the remaining vertices $v_0,v_1,\ldots,v_{2t+1}$ of $S$ while ensuring that all the respecting edges of $S$ between assigned vertices have distinct colours.

Suppose we have already chosen the vertices $v_0,\ldots,v_{i-1}$ of $S$ in $V(H)\setminus A$, for some $1 \leq i \leq 2t+1$, and let $E_{i-1}$ be the edges of $S$ in $A\cup \{v_0,v_1,\dots, v_{i-1}\}$. We will show that there exists a vertex $v_i \in V(H)\setminus (A \cup \{v_0,\ldots,v_{i-1}\})$ such that the edges $v_{i-1}v_i$, $a_iv_i$ and $v_ia_{i+1}$ have no colour in common with each other or with the colours of the edges in $E_{i-1}$.
By item (iii) in Lemma \ref{lem:regular}, with high probability there are at least $p^3n/14$ common neighbours of $v_{i-1}$, $a_i$ and $a_{i+1}$ in $H$. At most $|A|+i$ of those common neighbours are in $A \cup \{v_0,\ldots,v_{i-1}\}$. Since the colouring is proper, at most $3|E_{i-1}|$ of those common neighbours are such that their edges with $v_{i-1}$, $a_i$ or $a_{i+1}$ have a colour in common with the edges in the embedding so far. Therefore, there are at least$$ \frac{p^3n}{14} - (|A| + i) - 3|E_{i-1}| \geq \frac{p^3n}{14} - |A| - 4i -1 \geq \frac{p^3n}{14} - 5|A|$$
candidates for $v_i$.
Since $|A| \leq 2^{-8}p^3 n$ and $i < |A|$, it follows that there exists a vertex $v_i$ as desired. Note that we found a rainbow graph that contains $S$, since we artificially added some edges between $a_1$ and $v_i$ for $i=t+1,t+2,\dots, 2t+1$, which finishes the proof.
\end{proof}

\begin{figure}[htbp]
\centering
\begin{tikzpicture}[scale=1.2, every node/.style={circle, fill=black, inner sep=2pt}]

  \def\dx{1.2}
  \def\h{3}
  \def\count{8}
  \pgfmathsetmacro{\countmenos}{\count - 1}

  \foreach \i in {0,...,\count} {
    \coordinate (u\i) at (\i*\dx, 0);
    \coordinate (l\i) at (\i*\dx, -1.6\h);
  }
  \foreach \i in {0,...,\countmenos} {
    \pgfmathsetmacro{\x}{(\i+0.5)*\dx}
    \coordinate (t\i) at (\x, 1.2);
  }

  \foreach \i in {0,...,\countmenos} {
    \draw (u\i) -- (u\the\numexpr\i+1\relax);
    \draw (l\i) -- (l\the\numexpr\i+1\relax);
    \draw (u\i) -- (t\i) -- (u\the\numexpr\i+1\relax);
  }
  \draw (u0) to[out=210,in=150] (l0);
  \draw (u\count) to[out=330,in=30] (l\count);

  \foreach \i in {0,...,\count} {
    \node at (u\i) {};
    \node at (l\i) {};
  }
  \foreach \i in {0,...,\countmenos} { \node at (t\i) {}; }

  \node[draw=none, fill=none, above=2pt] at (t0) {$a_{1}$};
  \node[draw=none, fill=none, above=2pt] at (t1) {$a_{2}$};
  \node[draw=none, fill=none, above=2pt] at (t2) {$a_{3}$};
  \node[draw=none, fill=none, above=-2pt] at (t6) {$a_{t-1}$};
  \node[draw=none, fill=none, above=2pt] at (t7) {$a_{t}$};
  \node[draw=none, fill=none, below=2pt] at (u0) {$v_{0}$};
  \node[draw=none, fill=none, below=2pt] at (u1) {$v_{1}$};
  \node[draw=none, fill=none, below=2pt] at (u2) {$v_{2}$};
  \node[draw=none, fill=none, below=-3pt] at (u7) {$v_{t-1}$};
  \node[draw=none, fill=none, below=2pt] at (u8) {$v_{t}$};
  \node[draw=none, fill=none, below=-2pt] at (l0) {$v_{2t+1}$};
  \node[draw=none, fill=none, below=4pt] at (l1) {$v_{2t}$};
  \node[draw=none, fill=none, below=-2pt] at (l2) {$v_{2t-1}$};
  \node[draw=none, fill=none, below=2pt] at (l7) {$v_{t+2}$};
  \node[draw=none, fill=none, below=2pt] at (l8) {$v_{t+1}$};
\end{tikzpicture}
\caption{Given a set $A = \{a_1,\ldots,a_t\}$ of vertices of $H$, we construct a rainbow cycle $W$ in $H$ with the vertices $\{v_0,\ldots,v_t,p_0,\ldots,p_t\}$ containing the rainbow path $P = (p_0,\ldots,p_t)$.}
\label{fig:absorber}
\end{figure}

We now describe how a small subset of vertices from 
$V(G)\setminus (A \cup V(W))$ can be absorbed. 
At this stage, the choice of the set $A$, in particular the fact that it satisfies property~(i) of Lemma~\ref{lem:regular}, plays a crucial role.

The absorption idea is to explore the fact that every vertex is highly connected to $A$ in colours outside $H$ to make $A$ absorb the vertices of $X$ connecting them to the edges of $P$ in $W$.

\begin{lemma}\label{lemma:absorption}
Let $p \in (0,1/16]$ be a constant and let $\varphi$ be a proper edge-colouring of $K_n$. Then the following holds with high probability for $H = H(\varphi,p)$. 
Suppose that $A\subseteq V(H)$ is such that $d_{H}(v,A) \leq 4p|A|$ for every $v\in V(H)$ and that $W$ is an $A$-absorber in $H$. Then for every $X \subseteq V(H) \setminus (A \cup W)$ with $|X| \leq |A|/40$, there exists a rainbow cycle (in $K_n$) that spans $X \cup A \cup W$.
\end{lemma}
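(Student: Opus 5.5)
The plan is to insert the vertices of $X$ one at a time into the spine $P=v_{t+1}v_{t+2}\cdots v_{2t+1}$ of the absorber, routing each $x\in X$ through two vertices of $A$ and using only edges of $\overline{H}$. Set $t=|A|$ and $X=\{x_1,\dots,x_s\}$ with $s\le t/40$.

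The key observation is about colours. Every edge of $H$ has a colour in $L$, and every edge of $\overline H$ has a colour outside $L$. So any rainbow cycle in $H$, together with a set of edges of $\overline H$ that have pairwise distinct colours, still has all its colours distinct. Because of this, I will not need to track interactions between the new edges and the cycle provided by \Cref{lemma:absorber}. The only constraints are:
\begin{enumerate}[(a)]
\item the new edges lie in $\overline H$;
\item the new edges have pairwise distinct colours;
\item the chosen vertices of $A$ are distinct.
\end{enumerate}
I expect the argument to be deterministic given the hypotheses; the ``with high probability'' is only needed for the absorber itself.

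Greedy step: for $i=1,\dots,s$, take the spine edge $e_i=v_{t+i}v_{t+i+1}$. These are distinct edges of $P$ since $s<t$. Choose distinct, previously unused $b_i,b_i'\in A$ such that $v_{t+i}b_i$, $b_ix_i$, $x_ib_i'$ and $b_i'v_{t+i+1}$ are all edges of $\overline H$. Their four colours must be distinct from each other and from the colours of all previously chosen new edges.

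To see this is possible, count candidates for $b_i$. By hypothesis $d_H(v_{t+i},A)\le 4p|A|\le t/4$ and $d_H(x_i,A)\le t/4$, using $p\le 1/16$. Hence at least $t/2$ vertices of $A$ are joined to both $v_{t+i}$ and $x_i$ by $\overline H$-edges. From these, remove:
\begin{itemize}
\item at most $2s$ vertices of $A$ already used;
\item at most $2\cdot 4s$ vertices $b$ for which $v_{t+i}b$ or $x_ib$ carries one of the at most $4s$ forbidden colours, since properness means each colour appears at most once at $v_{t+i}$ and once at $x_i$;
\item one vertex $b$ for which $v_{t+i}b$ and $x_ib$ share a colour, which again by properness forces $b$ to be unique.
\end{itemize}
This leaves at least $t/2-10s-1>0$ choices. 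The same count, with a few more exclusions for $b_i$ itself and for the two colours just used, gives $b_i'$, using $s\le t/40$.

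Finally, let $A'=A\setminus\{b_i,b_i':i\le s\}$. By \Cref{lemma:absorber} there is a rainbow cycle in $H$ with vertex set $W\cup A'$ containing $P$, hence containing every $e_i$. In this cycle, replace each edge $e_i$ by the path $v_{t+i}\,b_i\,x_i\,b_i'\,v_{t+i+1}$. The replaced edges are distinct, and the inserted internal vertices are new and pairwise distinct, so the result is a cycle. Its vertex set is $W\cup A'\cup\{b_i,b_i'\}\cup X=W\cup A\cup X$, and it is rainbow by the colour-separation observation. The main point needing care is this separation: $L$-coloured versus non-$L$-coloured edges. It is what lets us ignore the colours of the absorber's cycle; beyond that, the argument is a routine greedy count.
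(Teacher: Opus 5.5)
Your proposal is correct and follows essentially the same route as the paper. Each $x_i$ is spliced into a distinct spine edge through two fresh vertices of $A$ using $\overline{H}$-edges. The greedy count is the paper's (at least $|A|/2$ candidates, minus $2|X|$ used vertices and $8|X|$ colour clashes). The final cycle comes from the absorber applied to $A'=A\setminus\{b_i,b_i' : i\le s\}$, and rainbowness rests on the same separation between colours in $L$ and colours outside $L$ that the paper uses implicitly ("since $P$ was constructed in $H$ and now we are in $\overline{H}$"). Your third exclusion is vacuous, since $v_{t+i}b$ and $x_ib$ share the endpoint $b$ and so cannot have the same colour in a proper colouring; this only makes your count more conservative.
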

\begin{proof}

Let $P=p_0p_1\cdots p_{|A|}$ be the path in $W$ given by~Lemma \ref{lemma:absorber}.
Let $X = \{x_1,x_2,\dots, x_\ell\}$, and set $C_0 = P$ and $A_0 = \emptyset$. 
We  greedily absorb each $ x_k \in X $ into the path $P$. Specifically, for each $ 1 \leq k \leq \ell $, assume we have constructed a rainbow path $ \mathcal{P}_{k-1} $ on the vertex set $ V(P) \cup A_{k-1} \cup \{x_1, \dots, x_{k-1}\} $ for some $A_{k-1} \subseteq A$. To absorb $ x_k $, 
we use the vertices from the edge $ p_{k-1}p_{k} $ 
together with two vertices from $A\setminus A_{k-1}$ to make a path from $p_{k-1} $ to $ p_{k} $ that passes through $x_k$.
More details are described below (see \Cref{fig:absortion}).

\begin{claim}\label{claim_of_sbsorption}
There exist two vertices $ a_{i_k}$ and $a_{j_k} \in A\setminus A_{k-1}$, such that the four edges
$$
x_k a_{i_k},\quad a_{i_k} p_{k-1},\quad x_k a_{j_k},\quad a_{j_k} p_{k}
$$
are all coloured distinctly and do not use colours in $\mathcal{P}_{k-1} $.
\end{claim}

\begin{proof}[Proof of Claim~\ref{claim_of_sbsorption}]

Notice that, by assumption, for every $v \in V(H)$,
$$
|N_{\overline{H}}(v) \cap A| \geq  |A| - 4p|A| \geq 3|A|/4,
$$
\noindent since $p\leq 1/16$. In particular, this implies that
\begin{align}\label{eq:tttt}
|N_{{\overline{H}}}(x_k)\cap N_{{\overline{H}}}(p_{k-1})\cap A|\geq  |A|/2.
\end{align}

Let us consider the common neighbours of $x_k$ and $p_{k-1}$ in ${\overline{H}}$.  By~\eqref{eq:tttt}, there are at least $|A|/2$ candidates for $a_{i_k}$. But in order to measure the valid candidates, we need to remove the vertices in $A_{k-1}$ and the vertices which at least one of the edges $x_ka_{i_k},\,a_{i_k}p_{k-1}$ have a previously used  colour. 
Notice that $|A_{k-1}|$ is at most $2|X|$. Moreover, since $P$ was constructed in $H$ and now we are in $\overline{H}$ and since the colouring is proper, the vertices that have a colour previously used in this process sum up to at most $8|X|$.
Thus, we need to avoid at most
$$2|X| + 8|X| = 10|X| \leq |A|/4$$ vertices from the $|A|/2$ candidates which ensures us that it is always possible to find an $a_{i_k}$.

By a similar argument, we consider the common neighbourhood of $x_k$ and $p_{k}$ and find an $a_{j_k}$.
\end{proof}

We define $A_k = A_{k-1} \cup \{a_{i_k}, a_{j_k}\}$ and $\mathcal{P}_k$ is obtained from $\mathcal{P}_{k-1}$ by deleting the edge $p_{k-1}p_k$ and adding the path $p_{k-1}a_{i_k}x_ka_{j_k}p_k$.
Notice that $\mathcal{P}_k$ is a rainbow path on the vertex set $ V(P) \cup A_{k} \cup \{x_1, \dots, x_{k}\} $ as desired.

At the end of the process, we obtain $A_\ell$ and $\mathcal{P}_\ell$.
To conclude the proof we consider the rainbow cycle $W'$ given by Lemma \ref{lemma:absorber} with $A' = A \setminus A_\ell$ and replace the path $P$ with $\mathcal{P}_\ell$.

\end{proof}

\begin{figure}[htbp]
\centering
\begin{tikzpicture}[scale=1.2, every node/.style={circle, fill=black, inner sep=2pt}]

  \def\dx{1.2}
  \def\h{3}
  \def\count{8}
  \pgfmathsetmacro{\countmenos}{\count - 1}

  \foreach \i in {0,...,\count} {
    \coordinate (u\i) at (\i*\dx, 0);
    \coordinate (l\i) at (\i*\dx, -1.6\h);
  }
  \foreach \i in {0,...,\countmenos} {
    \pgfmathsetmacro{\x}{(\i+0.5)*\dx}
    \coordinate (t\i) at (\x, 1.2);
  }
  \foreach \i in {1,...,3} {
    \pgfmathsetmacro{\x}{(\i+2)*\dx}
    \coordinate (x\i) at (\x, 3);
  }

  \foreach \i in {0,...,\countmenos} {
    \draw[black!20] (u\i) -- (u\the\numexpr\i+1\relax);
    \draw[black!20] (l\i) -- (l\the\numexpr\i+1\relax);
    \draw[black!20] (u\i) -- (t\i) -- (u\the\numexpr\i+1\relax);
  }
  \draw[black!20] (u0) to[out=210,in=150] (l0);
  \draw[black!20] (u\count) to[out=330,in=30] (l\count);

  \draw[thick] (l0) -- (t2) -- (x1) -- (t0) -- (l1);
  \draw[thick] (l1) -- (t3) -- (x2) -- (t6) -- (l2);
  \draw[thick] (l2) -- (t7) -- (x3) -- (t4) -- (l3);

  \foreach \i in {0,...,\count} {
    \node at (u\i) {};
    \node at (l\i) {};
  }
  \foreach \i in {0,...,\countmenos} { \node at (t\i) {}; }
  \foreach \i in {1,...,3} { \node at (x\i) {}; }

  \node[draw=none, fill=none, above=3pt] at (x1) {$x_1$};
  \node[draw=none, fill=none, above=3pt] at (x2) {$x_2$};
  \node[draw=none, fill=none, above=3pt] at (x3) {$x_3$};
  \node[draw=none, fill=none, below=2pt] at (l0) {$p_{0}$};
  \node[draw=none, fill=none, below=3pt] at (l1) {$p_{1}$};
  \node[draw=none, fill=none, below=3pt] at (l2) {$p_{2}$};
  \node[draw=none, fill=none, below=3pt] at (l3) {$p_{3}$};
  \node[draw=none, fill=none, above left=3pt] at (t0) {$a_{j_1}$};
  \node[draw=none, fill=none, above left=3pt] at (t2) {$a_{i_1}$};
  \node[draw=none, fill=none, above left=3pt] at (t3) {$a_{i_2}$};
  \node[draw=none, fill=none, above right=3pt] at (t6) {$a_{j_2}$};
  \node[draw=none, fill=none, above left=3pt] at (t4) {$a_{j_3}$};
  \node[draw=none, fill=none, above right=3pt] at (t7) {$a_{i_3}$};
\end{tikzpicture}
\caption{Given a set $X=\{x_1,x_2,\ldots\}$ of vertices of $G - \left(A \cup V(W)\right)$ relatively small, we construct a rainbow cycle that covers $X \cup A \cup V(W)$ by changing the rainbow cycle $W$ while taking \emph{detours} through $A$ instead of using the edges of the path $P$ contained in $W$.}
\label{fig:absortion}
\end{figure}

\section{Covering with two cycles}\label{sec:twocycles}

In this section, we will prove the following theorem which implies \Cref{thm:main}. We will use the following almost spanning result.

\begin{theorem}[\cite{BM19}]\label{thm:BM}
Let $\varphi$ be a proper edge-colouring of $K_n$ for a sufficiently large $n$. There exists a rainbow
cycle in $\varphi$ with at least $n - D\sqrt{n}\log n$ vertices for some constant $D>0$.
\end{theorem}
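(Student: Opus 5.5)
Theorem~\ref{thm:BM} is quoted from~\cite{BM19} and is used here as a black box. If I had to prove it, I would split it into an \emph{almost spanning rainbow path} step and a \emph{closing} step. For the closing I would use the colour-splitting device of Section~\ref{sec:randomGraph}.

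\textbf{Setup.} Fix a small constant $p$ and take $H=H(\varphi,p)$. I would also choose, independently, a random vertex set $R$ of size $\lceil\sqrt n\,\rceil$. Arguing exactly as in Lemma~\ref{lem:regular}(ii), with Chernoff bounds restricted to $R$, I expect that with high probability every pair $u,v$ has many common $H$-neighbours inside $R$. Concretely the target is at least $p^2|R|/6$ of them. The binomial has mean of order $p^2\sqrt n$, so the failure probability is $e^{-\Omega(\sqrt n)}=o(n^{-2})$ and the union bound over pairs still works.

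\textbf{Path step.} Inside $\overline H[V\setminus R]$ I would find a rainbow path $Q$ covering all but $O(\sqrt n\log n)$ vertices. The colouring restricted to $\overline H$ is still proper, and every vertex has $\overline H$-degree at least $(1-2p)n$ by Lemma~\ref{lem:regular}(i). The plan is to take a longest rainbow path and run a rotation/extension argument. If many vertices were missed, the endpoint has many $\overline H$-edges to missed vertices or to path vertices. Only $|Q|$ colours are used, and each used colour blocks at most one edge at a given vertex. This lets us either extend $Q$ or rotate it to a new endpoint. Iterating the rotations and counting the colours that get blocked should force $|V\setminus(R\cup Q)|=O(\sqrt n\log n)$.

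\textbf{Closing step.} Let $u,v$ be the ends of $Q$. Pick $w\in R$ in the common $H$-neighbourhood of $u$ and $v$. The edges $uw$ and $wv$ are distinct edges at $w$, so properness gives them different colours. Their colours lie in $L$, while $Q$ uses only colours outside $L$. Hence $Q+uw+wv$ is a rainbow cycle. It misses at most $|R|+O(\sqrt n\log n)=O(\sqrt n\log n)$ vertices, which is the claimed bound.

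\textbf{Main obstacle.} The closing step and the random preparation are routine. The real difficulty is the path step, i.e.\ proving that a longest rainbow path in a nearly complete properly coloured graph misses only $O(\sqrt n\log n)$ vertices. A naive greedy argument gives only linear or $n^{3/4}$-type losses, as in~\cite{APS17}. To reach $\sqrt n\log n$, I would first try a multi-round argument. In each round I would switch to a fresh random colour class, as in $H$, so that the reconnections in that round use colours disjoint from everything already placed. Each round should shrink the number of uncovered vertices by a constant factor until about $\sqrt n$ remain, with the $\log n$ factor coming from the number of rounds.
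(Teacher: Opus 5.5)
The paper does not prove this statement: it is quoted from \cite{BM19} and used only as a black box in the proof of \Cref{thm:main}, so your first sentence matches the paper. The sketch you add, however, has a genuine gap, and it sits in the setup, not only in the step you flag as hard.

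You reserve a random colour set $L$ of \emph{constant} density $p$ for the two closing edges, and you require $Q$ to lie in $\overline H$, i.e.\ to use only colours outside $L$. Take $n$ even and let $\varphi$ be a $1$-factorisation of $K_n$, the colouring with exactly $n-1$ colours mentioned in the introduction. Then $|L|\sim\mathrm{Bin}(n-1,p)$, so with high probability $|L|\geq pn/2$. Every rainbow path in $\overline H$ has at most $n-1-|L|$ edges, so $Q$ misses at least $|L|-|R|=\Omega(n)$ vertices of $V\setminus R$. Your final cycle therefore misses $\Omega(pn)$ vertices. The path step is false as stated, whatever rotation argument you use. Your own count already signals this: an endpoint $x$ is only guaranteed $d_{\overline H}(x)-(|Q|-1)\geq n-2pn-|Q|$ edges of unused colour, which is vacuous once $|Q|\geq (1-2p)n$.

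In such colourings all but $O(\sqrt n\log n)$ colours must appear on the final cycle. So the colours you withhold from the path and do not spend on closing, summed over all rounds of your multi-round plan, must number $O(\sqrt n\log n)$. That forces densities of order $\log n/\sqrt n$, not constants. But then the closing step breaks. The expected number of common $H$-neighbours of a pair inside $R$ is about $p^2|R|/3=O(\log^2 n/\sqrt n)=o(1)$. It is still $o(1)$ even if you enlarge $R$ to the full allowed size $\Theta(\sqrt n\log n)$. So neither the Chernoff bound nor the union bound goes through, and in general there is no $w\in R$ joined in $H$ to both ends of $Q$.

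Finding an almost spanning rainbow path while almost all colours are in play, and closing it without wasting more than $O(\sqrt n\log n)$ colours, is exactly what \cite{BM19} has to prove. Your path step, left at ``should force'', is therefore the whole theorem rather than a routine gap. A P\'osa-type rotation argument also does not transfer directly. Each rotation swaps one colour out and one colour in, so the set of usable edges changes from one rotated path to the next, and the usual counting of the neighbourhood of the endpoint set is not available.
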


Now we are ready to prove the main theorem.


\begin{proof}[Proof of Theorem \ref{thm:main}]

Let $p \in (0,1/16]$ be any constant, let $n$ be sufficiently large and let $\varphi$ be a proper edge-colouring of $K_n$. Let $H$ be a typical sample of $H(\varphi,p)$ as in Lemma \ref{lem:regular} with $\alpha= 2^{-8}p^3$. Then there exists $A\subset V(H)$, with $|A|=\lfloor \alpha n\rfloor$ such that $|N_H(v)\cap A| \leq 4p|A|$ for every $v\in V(H)$. By Lemma \ref{lemma:absorber}, there exists $W\subset V(H)\setminus A$ which is an $A$-absorber in $H$. Since $|W|=2|A|+2$, we have that

\[n':=|V(H)\setminus (A\cup W)| = n - 3|A| - 2 \geq n -\dfrac{3p^3n}{2^8} -2\geq \dfrac{n}{2}.\]

By Theorem \ref{thm:BM}, there exists a rainbow cycle $C_1$ in $V(K_n)\setminus (A\cup W)$ that covers all but $D\sqrt{n'} \log n'$ vertices. Let $X$ be the vertices of $V(K_n)\setminus (A\cup W \cup V(C_1))$ and note that

\[|X| \leq D\sqrt{n'} \log n' = o(n).\]

Therefore, for sufficiently large $n$, we have that $|X|\leq |A|/40$ and by Lemma \ref{lemma:absorption} there exists a rainbow cycle $C_2$ that spans $X\cup A\cup W$. Therefore $C_1$ and $C_2$ are two vertex-disjoint rainbow cycles that partition the vertices of $K_n$, which finishes the proof.



\end{proof}



\medskip

\section{Concluding remarks}\label{sec:discussion}

In this paper, we established a rainbow analogue of Lehel’s conjecture by proving that every properly edge-coloured complete graph $K_n$ admits a partition of its vertex set into two vertex-disjoint rainbow cycles as long as $n$ is sufficiently large. Our proof combines the near-spanning rainbow cycle result of Balogh and Molla --- which guarantees a rainbow cycle covering all but $O(\sqrt{n}\log n)$ vertices in any proper edge-colouring of $K_n$ --- with a new absorbing structure that incorporates the remaining vertices into a second rainbow cycle disjoint from the first. As a consequence, the union of the two cycles spans $V(K_n)$.

As discussed in the introduction, it is not always possible to ensure that the two rainbow cycles are colour-disjoint. Our argument does not attempt to minimise the number of colours shared by the two cycles, and it would be interesting to understand to what extent such a refinement is possible.

A natural direction for future research is to investigate analogous decomposition problems into powers of cycles. For an integer $r \ge 2$, the \emph{$r$-th power} of a cycle $C_k$ (which we call an \emph{$r$-cycle}) is the graph on the same vertex set in which two vertices are adjacent whenever their distance along the cycle is at most $r$. By Vizing's Theorem, $K_n$ admits a proper edge-colouring with at most $n$ colours. In such colourings, any rainbow $r$-cycle can contain at most $\lfloor n/r \rfloor$ vertices, which makes it impossible to partition the vertices of $K_n$ into two rainbow $r$-cycles for any fixed $r$.

Nevertheless, it is natural to conjecture that every properly edge-coloured $K_n$ can be decomposed into a bounded number of vertex-disjoint rainbow $r$-cycles. A first step in this direction would be to prove that every properly coloured $K_n$ contains a rainbow $r$-cycle covering at least $n/r - o(n)$ vertices. Achieving such a result appears to require new ideas beyond the methods developed here.

\bigskip

\section*{Acknowledgements}

The main work of this paper was carried out during the $1^a$ Escola Brasileira de Combinatória held in São Sebastião. We are very grateful to Rob Morris for helpful suggestions on the presentation of this paper.


\bigskip
\bibliographystyle{alpha}
\addcontentsline{toc}{chapter}{Bibliography}
\bibliography{bibliography}

@article{BM19,
  title={Long rainbow cycles and Hamiltonian cycles using many colors in properly edge-colored complete graphs},
  author={Balogh, J. and Molla, T.},
  journal={European Journal of Combinatorics},
  volume={79},
  pages={140--151},
  year={2019},
  publisher={Elsevier}
}

@article{A89,
  title={Hamilton circuits with many colours in properly edge-coloured complete graphs},
  author={Andersen, L.},
  journal={Mathematica {S}candinavica},
  pages={5--14},
  year={1989},
  publisher={JSTOR}
}

@article{APS17,
  title={Random subgraphs of properly edge-coloured complete graphs and long rainbow cycles},
  author={Alon, N. and Pokrovskiy, A. and Sudakov, B.},
  journal={Israel Journal of Mathematics},
  volume={222},
  pages={317--331},
  year={2017},
  publisher={Springer},
  DOI={10.48550/arXiv.1608.07028}
}

@article {LRS98,
    AUTHOR = {{\L}uczak, T. and R\"{o}dl, V. and Szemer\'{e}di, E.},
     TITLE = {Partitioning two-coloured complete graphs into two
              monochromatic cycles},
  JOURNAL = {Combinatorics, Probability and Computing},
    VOLUME = {7},
      YEAR = {1998},
    NUMBER = {4},
     PAGES = {423--436},
      ISSN = {0963-5483},
   MRCLASS = {05C55 (05C38 05C80)},
  MRNUMBER = {1680072},
       DOI = {10.1017/S0963548398003599},
       URL = {https://doi.org/10.1017/S0963548398003599},
}

@article {Al08,
    AUTHOR = {Allen, P.},
     TITLE = {Covering two-edge-coloured complete graphs with two disjoint
              monochromatic cycles},
    JOURNAL = {Combinatorics, Probability and Computing},
    VOLUME = {17},
      YEAR = {2008},
    NUMBER = {4},
     PAGES = {471--486},
      ISSN = {0963-5483},
   MRCLASS = {05C70 (05C35)},
  MRNUMBER = {2433934},
       DOI = {10.1017/S0963548308009164},
       URL = {https://doi.org/10.1017/S0963548308009164},
}

@article {BeTh10,
    AUTHOR = {Bessy, S. and Thomass\'{e}, S.},
     TITLE = {Partitioning a graph into a cycle and an anticycle, a proof of
              {L}ehel's conjecture},
   JOURNAL = {J. Combin. Theory Ser. B},
  FJOURNAL = {Journal of Combinatorial Theory. Series B},
    VOLUME = {100},
      YEAR = {2010},
    NUMBER = {2},
     PAGES = {176--180},
      ISSN = {0095-8956},
   MRCLASS = {05C35 (05C38)},
  MRNUMBER = {2595702},
MRREVIEWER = {Elizabeth J. Billington},
       DOI = {10.1016/j.jctb.2009.07.001},
       URL = {https://doi.org/10.1016/j.jctb.2009.07.001},
}

@article{Gy16,
  title={Vertex covers by monochromatic pieces—a survey of results and problems},
  author={Gy{\'a}rf{\'a}s, A.},
  journal={Discrete Mathematics},
  volume={339},
  number={7},
  pages={1970--1977},
  year={2016},
  publisher={Elsevier}
}

@PhdThesis{Ay79,
  author   = {Ayel, J.},
  title    = {{Sur l'existence de deux cycles suppl{\'e}mentaires unicolores, disjoints et de couleurs diff{\'e}rentes dans un graphe complet bicolore}},
  school   = {{Universit{\'e} Joseph-Fourier - Grenoble I}},
  year     = {1979},
  type     = {Theses},
  month    = May,
  url      = {https://tel.archives-ouvertes.fr/tel-00289185},
}

@article{MaMe84,
  title={On a problem of {G}. {H}ahn about coloured Hamiltonian paths in $K_{2t}$},
  author={Maamoun, M. and Meyniel, H.},
  journal={Discrete mathematics},
  volume={51},
  number={2},
  pages={213--214},
  year={1984},
  publisher={Elsevier}
}

@article{Bo26,
  title={A proof of {A}ndersen's rainbow path conjecture for large $n$},
  author={Bowtell, C. and Montgomery, R. and M{\"u}yesser, A. and Pokrovskiy, A.},
  journal={arXiv:2608.06369},
  year={2026}
}

\end{document}